\newcolumntype{C}{>{\centering\arraybackslash}X}
\newcolumntype{L}{>{\raggedright\arraybackslash}X}
\newcommand{\R}[1][\empty]{\mathbb{R}^{#1}}
\newtheorem{definition}{Definition}
\newtheorem{lemma}{Lemma}
\newtheorem{theorem}{Theorem}
\newtheorem{example}{Example}
\newtheorem{proposition}{Proposition}
\newtheorem{remarkth}[definition]{Remark}
\newenvironment{remark}{\begin{remarkth}\upshape}{\end{remarkth}}
\tikzstyle{vertex}=[circle,fill=black!20,minimum size=15pt,inner sep=0pt]
\tikzstyle{selected vertex} = [vertex, fill=red!24]
\tikzstyle{edge} = [draw,thick,-]
\tikzstyle{dedge} = [draw,thick,<->]
\tikzstyle{shadowdedge} = [draw, dotted,->]
\tikzstyle{weight} = [font=\small]
\tikzstyle{selected edge} = [draw,line width=5pt,-,red!50]
\tikzstyle{ignored edge} = [draw,line width=5pt,-,black!20]
\title{\LARGE \bf Contracting Forced Lagrangian and Contact Lagrangian Systems: application to nonholonomic systems with symmetries}
\author{Alexandre Anahory Simoes$^{1}$\thanks{$^{1}$A. Anahory Simoes is with the School of Science and Technology, IE University, Spain. email:  {\tt\small alexandre.anahory@ie.edu}}%
, and Leonardo Colombo$^{2}$%
\thanks{$^{2}$Leonardo Colombo is Centre for Automation and Robotics (CSIC-UPM), Centre for Automation and Robotics (CSIC-UPM), Ctra. M300 Campo Real, Km 0,200, Arganda
del Rey - 28500 Madrid, Spain. email:  {\tt\small leo.colombo@icmat.es}}%
\thanks{The authors acknowledge financial support from Grant PID2019-106715GB-C21 funded by MCIN/AEI/ 10.13039/501100011033.}
}
\begin{document}
\newgeometry{left=54pt,bottom=54pt,right=54pt,top=54pt}

\maketitle
\thispagestyle{empty}
\pagestyle{empty}

\begin{abstract}
    In this paper we address the problem of identifying contracting systems among dynamical systems appearing in mechanics. First, we introduce a sufficient condition to identify contracting systems in a general Riemannian manifold. Then, we apply this technique to establish that a particular type of dissipative forced mechanical system is contracting, while stating immediate consequences of this fact for the stability of these systems. Finally, we use the previous results to study the stability of particular types of Contact and Nonholonomic systems.
\end{abstract}


\section{Introduction}
Contraction theory has appeared several times in the control literature (see \cite{Lohmiller, Slotine, Sontag2010, Aghannan} and \cite{Jouffroy} for a survey on the subject). This subject has deserved some attention due to its potential applications in many areas ranging from bio-molecular systems to the design of controllers for mechanical systems (e.g., \cite{Pavlov,Russo,Sanfelice,Lohmiller2}). In effect, contraction theory is a useful tool in nonlinear analysis to demonstrate stability properties of dynamical systems. Dynamical systems exhibiting contracting behaviour collapse exponentially fast into a single equilibrium point due to the fact that in a neighbourhood of an equilibrium point two initially distant solutions converge at exponential rate. As far as we know, the first time that contraction theory was studied in a coordinate-free setting using Riemannian manifolds was in \cite{Simpson}, where the authors state and prove all the results using only intrinsic properties that do not rely on the particular choice of coordinates.


This paper tries to address the following fundamental question: ``On which circumstances does a mechanical system exhibit contracting behaviour?". Some examples have been examined in the literature, for instance, in \cite{Simpson}, the authors study the case of the damped harmonic oscillator and, in a slightly different setting, \cite{Lohmiller2} examines specific examples of controlled mechanical systems. However, this question, as it is posed, is too general. On one hand, the important class of conservative mechanical systems do not have contracting behaviour. On the other hand, the examples of non-conservative mechanical systems are still too wide. In this paper, we restrict ourselves to a particular type of forced mechanical systems and we prove that, under some mild assumptions, it exhibits contracting behaviour. Then, we examine the implications of this result in the contact formulation of dissipative system and in a special case of nonholonomic systems with symmetries.

Forced mechanical systems appear frequently in mechanical problems where a force that cannot be absorbed by the potential function is acting on the system. Typical examples of forces include friction, air resistance or external factors such as water currents or wind disturbances (see \cite{Abraham1978, bullo, colombo2021forced} for instance). We are specially interested in the former case. Both friction as well as air resistance are examples of dissipative forced systems where the force causes a consistent loss of total energy. The equations of motion for these systems \textit{a priori} are non-variational, i.e., they are not derivable from minimizing the action function associated with a Lagrangian. 


Contact Lagrangian systems have been developed in the last few years as an attempt to deal with dissipative systems in a unified framework (see \cite{Bravetti2017, deLeon2020, Gaset2019, Gaset2020, Kaufman, Ciaglia18, colombo2022contact}). In this framework, the action of the mechanical system is added as a variable to the phase space. Through a modification of the principle of least work (\cite{deLeon2020, Herglotz1930, lopez2022nonsmooth}), the modified variational principle outputs trajectories of a dissipative system evolving in a contact manifold \cite{marle}. This framework sheds some light on qualitative features possessed by dissipative systems. One of its main advantages is the existence of symmetries \cite{deLeon2020} and the possibility to apply reduction by symmetries techniques. It is worth mentioning that contact systems have also been used to model the evolution of thermodynamic variables \cite{ Bravetti2018,  Hernandez1998}, quantum mechanics \cite{Ciaglia18} and the evolution of finite state dissipative quantum systems \cite{Bravettiquantum}.


Some mechanical systems have a restriction either on the configurations the system may assume or in the velocities the system is allowed to go. Systems with such restrictions are generally called constrained systems \cite{LR2011}. In particular, nonholonomic  systems \cite{Neimark, Bloch, LMdD1996} are systems for which the velocity is restricted. Similarly to a forced system, nonholonomic equations are non-variational. Some special examples of nonholonomic systems possess symmetries that allow to reduce the number of equations of motion (\cite{BKMM96, BMZ2005, koiller92}). One of the most studied classes of examples is that of Chaplygin systems, in which both the Lagrangian function as well as the constraints are invariant with respect to some symmetry. Then one can take advantage of the symmetries to reduce the number of the equations of motion and the resulting (reduced) mechanical system happens to be a forced mechanical system. We take advantage of this case to apply contraction theory to Chaplygin nonholonomic systems. See \cite{cantrijn,CaLeMaMa, BaSn, SCSarlet99} for a description of Chaplygin systems.


In this paper, we advance one step further in the direction of the coordinate-free description of contracting systems \cite{Simpson} and prove a sufficient condition for a system to be contracting on a general Riemannian manifold. Then, we show that forced mechanical systems satisfying some assumptions are contracting with respect to a Riemannian metric called the \textit{mechanical contraction metric}. Hopefully, this result is the first building block of a general theory characterizing the situation for all forced mechanical systems. Then we discuss the implications concerning the stability of these systems and translate the result to the case of Contact Lagrangian systems. It is worth noting that we discuss briefly the implications for the stabilization of a Contact Lagrangian system, a question that remains to explore in the Contact literature. Finally, we take advantage of the existing symmetries in Chaplygin systems to identify examples that exhibit contracting behavior and examine the implications on the complete set of nonholonomic equations of motion. In particular, we address the question of stability for these systems.

The remainder of the paper is organized as follows: in Section 2 we review the mathematical machinery used throughout the text. In Section 3, we review contraction systems in general Riemannian manifolds and we prove a new sufficient condition to find a contraction metric in Theorem \ref{main:theorem}. In Section 4, we get down to a particular type of forced Lagrangian system and, as the main result of the paper, we prove in Theorem \ref{mechanical:theorem} that under some assumptions these systems exhibit contracting behavior. In Section 5, we address a similar problem using the framework of Contact Lagrangian systems. And finally, in Section 6, we apply our main result to a special type of nonholonomic system with symmetries: Chaplygin systems.

\section{Preliminaries on differential geometry and mechanical systems}

Suppose $Q$ is a manifold of dimension $n$. Throughout the text, $q^{i}$ will denote a particular choice of local coordinates on this manifold and $TQ$ denotes its tangent bundle, with $T_{q}Q$ denoting the tangent space at a specific point $q\in Q$ generated by the coordinate vectors $\frac{\partial}{\partial q^{i}}$. Usually $v_{q}$ denotes a vector at $T_{q}Q$ and, in addition, the coordinate chart $q^{i}$ induces a natural coordinate chart on $TQ$ denoted by $(q^{i},\dot{q}^{i})$. There is a canonical projection $\tau_{Q}:TQ \rightarrow Q$, sending each vector $v_{q}$ to the corresponding base point $q$. Note that in coordinates $\tau_{Q}(q^{i},\dot{q}^{i})=q^{i}$.

A vector field $X$ on $Q$ is a map assigning to each point $q\in Q$ a vector tangent to $q$, that is, $X(q)\in T_{q}Q$. In the context of mechanical systems, we find a special type of vector fields that are always defined on the tangent bundle $TQ$, considered as a manifold itself. A second-order vector field (SODE) $\Gamma$ on the tangent bundle $TQ$ is a vector field on the tangent bundle satisfying the property that $T\tau_{Q}\left(\Gamma (v_{q})\right) = v_{q}$. The expression of any SODE in coordinates is the following:
$$\Gamma(q^{i},\dot{q}^{i})= \dot{q}^{i}\frac{\partial}{\partial q^{i}} + f^{i}(q^{i},\dot{q}^{i}) \frac{\partial}{\partial \dot{q}^{i}},$$
where $f^{i}:TQ \rightarrow \mathbb{R}$ are $n$ smooth functions.

From now on, we will introduce concepts and properties that are valid for any manifold regardless of being a tangent bundle or not. To emphasize this, we will consider from now on any manifold $M$ of dimension $n$. An equilibrium point of a vector field $X$ on $M$ is a point $\bar{q} \in M$ for which $X(\bar{q})=0$. An integral curve of the vector field $X$ is a curve $\gamma:I\rightarrow M$ with $I\subseteq \mathbb{R}$ such that $\gamma'(t)=X(\gamma(t))$. The flow of $X$ is a map $\phi^{X}:I\times M \rightarrow M$ for which the map $t \mapsto \phi_{t}^{X}(q)$ is an integral curve of $X$ and $\phi_{0}^{X}(q)=q$.

A Riemannian metric $g$ on a manifold $M$ is a $(0,2)$ tensor such that for each fixed $q\in M$, the map $\mathbb{G}_{q}:T_{q}M \times T_{q}M \rightarrow \mathbb{R}$ is a symmetric positive-definite bilinear form on the vector space $T_{q}M$. The Riemannian metric induces a norm at each base point $q\in M$ which we will denote by $\|\cdot\|_{g}$. With the definition of a norm, one can measure the length of a curve $\gamma:[0,1]\rightarrow M$ on $M$: $L(\gamma) = \int_{0}^{1} \|\gamma'(t)\|_{g} \ dt.$
In addition, the distance between two points $q_{0}$ and $q_{1}$ is given by the infimum length among all piece-wise smooth curves connecting $q_{0}$ and $q_{1}$.

The Lie derivative of a function $f:M\rightarrow \mathbb{R}$ in the direction of the vector field $X$ on $M$ is given by the expression $\mathcal{L}_{X} f = df (X)$. One can extend this notion to differentiate vector fields and tensors. In the first case, the Lie derivative of a vector field $Y$ in the direction of $X$ is given by the Lie bracket $\mathcal{L}_{X} Y = [X,Y]$. And in the second case, the Lie derivative of a tensor $g$ in the direction of $X$ is the $(0,2)$ tensor for which
$$(\mathcal{L}_{X} g) (Y,Z) = X(g (Y,Z)) - g(\mathcal{L}_{X} Y, Z) - g(Y, \mathcal{L}_{X} Z).$$

The complete lift of a vector field $X$ on $M$ (\cite{LR2011,Yano1973}) is a vector field on the tangent bundle $TM$ denoted by $X^{c}$ whose flow is $T\phi_{t}^{X}$, where $\phi_{t}^{X}$ is the flow of $X$. If the local coordinates of $X$ are $X=X^{i}\frac{\partial}{\partial q^{i}}$, then
$$X^{c}=X^{i}\frac{\partial}{\partial q^{i}} + \dot{q}^{j}\frac{\partial X^{i}}{\partial q^{j}}\frac{\partial}{\partial \dot{q}^{i}}.$$

The possible configurations of a mechanical systems are usually described by a manifold $Q$ called the configuration manifold (for an introduction see \cite{Abraham1978,bullo}). Given a Riemannian metric $g$ in the configuration manifold $Q$, all the information about a mechanical system is contained in a function on the tangent bundle $TQ$ called the Lagrangian of the system. Given a Lagrangian $L:TQ \rightarrow \mathbb{R}$, the equations of motion are given by Euler-Lagrange equations
$$\frac{d}{dt}\frac{\partial L}{\partial \dot{q}^{i}} - \frac{\partial L}{\partial q^{i}}= 0.$$

If the mechanical system is subjected to external forces and impulses, this terms are dealt with independently from the Lagrangian. A force is a map $F:TQ \rightarrow T^{*}Q$ given in local coordinates by the expression $F(v_{q}) = F^{i}(v_{q})dq^{i}$. In the presence of external forces, the equations of motion must be modified according to
$$\frac{d}{dt}\frac{\partial L}{\partial \dot{q}^{i}} - \frac{\partial L}{\partial q^{i}}= F^{i}.$$

The kinetic energy of a system in a Riemannian manifold is given by a special Lagrangian function $L_{g}:TQ\rightarrow \mathbb{R}$ defined by $L_{g}(v_{q}) = \frac{1}{2}g(v_{q},v_{q}).$
A mechanical system is said to be of mechanical type or is said to have a mechanical Lagrangian if it is described by a Lagrangian of the type $L=L_{g} - V\circ \tau_{Q}$, where $V:Q \rightarrow \mathbb{R}$ is a smooth function called the potential energy. 

We will consider throughout the text, mechanical Lagrangians of the form
$$L(q,\dot{q}) = \frac{1}{2}\dot{q}^{T}M\dot{q} - \frac{1}{2}q^{T}Kq,$$
where $q, \dot{q} \in \mathbb{R}^{n}$. In consequence, our discussion will be valid either for systems with configuration manifold $Q = \mathbb{R}^{n}$ or, if not, within a coordinate chart of a more general $n$-dimensional manifold. In addition, we will consider an external dissipative force of the form $F(q,\dot{q}) = -b\dot{q}^{i} dq^{i},$ where $b>0$. In this way, the mechanical system satisfies the following equations of motion
$$\ddot{q} = M^{-1} \left(- Kq - b \dot{q}\right).$$
Note that the previous equations of motion can be seen as the integral curves of a SODE vector field $\Gamma$ (\cite{Abraham1978,bullo,LR2011,Bloch}) called a forced mechanical vector field.
This force map is called dissipative since the derivative of the energy of the mechanical system along the trajectories decreases. In general, the energy is given by the expression $E_{L} = \dot{q}^{i}\frac{\partial L}{ \partial \dot{q}^{i}} - L$. In this case, we obtain $\frac{d}{dt} E_{L} = -b\|\dot{q}\|^{2}<0.$

A contact Lagrangian system is able to encapsulate dissipative behaviour in a single Lagrangian function, at the price of working in a higher dimension. The dynamics of dissipative systems may be described by a function called contact Lagrangian $L:TQ\times \mathbb{R}\rightarrow \mathbb{R}$ and by the equations of motion
$$\frac{d}{dt}\frac{\partial L}{\partial \dot{q}^{i}} - \frac{\partial L}{\partial q^{i}}= \frac{\partial L}{\partial \dot{q}^{i}} \frac{\partial L}{\partial z}, \quad \dot{z}=L,$$
where $(q,\dot{q},z)$ are the coordinates on $TQ\times \mathbb{R}$ (see, e.g. \cite{deLeon2020, Herglotz1930}). Contact mechanical Lagrangians are of the type $L=L_{g} - V\circ \tau_{Q} - b z$ and, in particular, we will be interested in contact mechanical Lagrangians of the form
$$L(q,\dot{q},z) = \frac{1}{2}\dot{q}^{T}M\dot{q} - \frac{1}{2}q^{T}Kq - \frac{b}{m}z,$$
with $m, b>0$. The equations of motion are in this case
$$\ddot{q} = M^{-1} \left(- Kq - \frac{b}{m} M\dot{q}\right),$$
together with $\dot{z}=L$. This approach has some geometric advantages (\cite{Bravetti2017, deLeon2020, Gaset2019, Gaset2020, Kaufman, Ciaglia18,Lainz}).

Later, we will apply our main result to a special type of nonholonomic systems with symmetries: Chaplygin systems (see \cite{cantrijn,CaLeMaMa, BaSn, SCSarlet99}. A nonholonomic system is a mechanical system whose velocities are constrained. In this text, we will consider only linear constraints on the velocities, implying that the velocity of the system is always contained in a subspace of the tangent space. This subspace, called the constraint subspace, is usually denoted by $\mathcal{D}_{q}$ and is locally given by an expression of the type $\mu^{a}_{i}\dot{q}^{i}=0$. The nonholonomic equations of motion, as deduced from a Lagrangian function, are
$$\frac{d}{dt}\frac{\partial L}{\partial \dot{q}^{i}} - \frac{\partial L}{\partial q^{i}}= \lambda_{a}\mu^{a}_{i},$$
where $\lambda_{a}$ is a Lagrange multiplier that might be computed using the constraints (see \cite{Neimark, Bloch, LMdD1996}).

Now, suppose that $\Phi:G\times Q\rightarrow Q$ is an action of a Lie group $G$ on a smooth manifold $Q$, denoted by $\Phi(g,q)=\Phi_g (q)$. Under mild assumptions ($\Phi$ is a free and proper action) the natural projection $\pi:Q\rightarrow Q/G$ is a surjective submersion onto the reduced space $\overline{Q}:= Q/G$.

A generalized Chaplygin system is a nonholonomic system whose Lagrangian $L:TQ\rightarrow\R$ is a $G$-invariant regular function for the lifted action of $G$ on $TQ$, i.e., $L(T\Phi_{g}(v))=L(v), \ \forall \ v\in T_{q}Q$. Moreover, the constraints $\mathcal{D}_{q}$ are also $G-$ invariant, i.e., $T\Phi_{g}(\mathcal{D}_{q})=\mathcal{D}_{\Phi_{g}(q)}$ and satisfy $T_{q}Q=\mathcal{D}_{q} \oplus V_{q}$, where $V_{q}\subseteq T_{q}Q$ is the vertical subspace defined by the vectors $v_{q}$ whose projection by $T_{q}\pi$ vanishes. In this case, the constraint subspace is also called the horizontal subspace.

Given the properties satisfied by Chaplygin systems, every vector $X\in T_{q}Q$ can be uniquely written as $X=\text{hor}(X)+\text{ver}(X),$ where $\text{hor}(X)$ is an horizontal vector field (resp. $\text{ver}(X)$ is a vertical vector field), i.e., $\text{hor}(X)(q)\in \mathcal{D}_{q}$ (resp., $\text{ver}(X)(q)\in V_{q}$).

The horizontal lift of a vector field $Y$ on the reduced space $\overline{Q}$ is the unique horizontal vector field $Y^{H}$ on $Q$ that projects onto $Y$, i.e., $T_{q}\pi(Y^{H}(q))=Y(\pi(q))$. Analogously, the horizontal lift of a curve $\gamma$ on $\overline{Q}$ is the unique curve $\gamma^{H}$ on $Q$ that projects to $\gamma$ and whose tangent vector is horizontal at every instant.

Now, given a generalized Chaplygin system, the nonholonomic equation of motion can be seen as the integral curves of a vector field $\Gamma_{nh}\in \mathfrak{X}(\mathcal{D})$. This vector field reduces to a vector field $\overline{\Gamma}=T(T\pi)(\Gamma_{nh})$ called the reduced dynamics and whose integral curves are called the reduced trajectories. Conversely, the nonholonomic vector field is also the horizontal lift on the reduced vector field. In addition, the reduced vector field is a forced mechanical vector field for the reduced Lagrangian function $l:T\overline{Q}\rightarrow \mathbb{R}$ defined by $l(X(\pi(q)))=L(X^{H}(q))$ for any $X\in \mathfrak{X}(\overline{Q})$ and some force map $F:T\overline{Q} \rightarrow T^{*}\overline{Q}$ related with the constraints (see more details in \cite{cantrijn, CaLeMaMa}).

\section{Contracting systems}

First, we review the definition of a contracting system according to \cite{Simpson}. Note that, a contracting system is not only composed by a particular dynamics but also by an associated Riemannian metric and open subset.

\begin{definition}
	A contracting system is a tuple $(M,X,g,\mathcal{U})$ where $M$ is a smooth manifold, $X$ is a smooth vector field on $M$, $g$ is a Riemannian metric on $M$, called the contraction metric, and $\mathcal{U}$ is an open subset on $M$ such that there exists $\lambda>0$, called the \textit{contraction rate}, for which
	\begin{equation}\label{contraction:condition}
		\mathcal{L}_X g (v_{x},v_{x}) \leqslant -2\lambda g(v_{x},v_{x}), \quad \forall x\in \mathcal{U}, v_{x} \in T_{x}M.
	\end{equation}
\end{definition}

Under the hypothesis that the region $\mathcal{U}$ is $K$-reachable with $K\geqslant 1$, that is, for every pair of points $x_{0}$ and $x_{1}$ in $\mathcal{U}$ there exists a continuously differentiable curve $\gamma:[0,1]\rightarrow \mathcal{U}$ satisfying $\gamma(0)=x_{0}$, $\gamma(1)=x_{1}$ and $\ell_{g}(\gamma)\leqslant K d_{g}(x_{0},x_{1})$, we have several results describing the qualitative behaviour of a contracting system. The following is the main theorem in \cite{Simpson} describing the implications of a system being contracting.

\begin{theorem}
	Suppose that $(M,X,g, \mathcal{U})$ is a contracting system with contraction rate $\lambda>0$ and for some $K\geqslant 1$, $\mathcal{U}$ is a $K$-reachable forward $X$-invariant set. Furthermore, suppose that $X$ is forward complete on $\mathcal{U}$ and denote its flow by $\Phi_{t}(x)$. Then for each $x_{0}$, $x_{1} \in \mathcal{U}$ and $t\geqslant 0$ we have that
	\begin{equation*}
		d_{g}(\Phi_{t}(x_{0}),\Phi_{t}(x_{1})) \leqslant K e^{-\lambda t} d_{g}(x_{0},x_{1}).
	\end{equation*}
\end{theorem}

In particular, we are interested in some consequences of the previous theorem such as the following Proposition from \cite{Simpson}.

\begin{proposition}\label{contracting:lyapunov:prop}
	In the conditions of the previous theorem, if assume in addition that $(\mathcal{U},d_{g})$ is a complete metric space, then $X$ has a unique fixed point $\bar{x}\in \mathcal{U}$ and for each $x\in \mathcal{U}$, $\Phi_{t}(x) \rightarrow \bar{x}$ exponentially fast as $t\rightarrow +\infty$. Moreover,
	$$V(x)=\|X(x)\|_{g}^{2}$$
	is a strict local Lyapunov function for the unique fixed point.
\end{proposition}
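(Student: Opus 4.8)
The plan is to derive all three conclusions from the exponential contraction estimate $d_g(\Phi_t(x_0),\Phi_t(x_1)) \leq K e^{-\lambda t} d_g(x_0,x_1)$ supplied by the previous theorem, together with the completeness of $(\mathcal{U},d_g)$. The central observation is that for a fixed time horizon $T$ chosen so that $K e^{-\lambda T} < 1$, the time-$T$ flow map $\Phi_T$ becomes a genuine contraction in the metric-space sense, so that the Banach fixed point theorem applies directly.

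First I would establish existence and uniqueness of the fixed point. Since $\mathcal{U}$ is forward $X$-invariant and $X$ is forward complete, $\Phi_T$ maps $\mathcal{U}$ into itself and is well defined. Choosing $T$ with $Ke^{-\lambda T}<1$, the estimate shows $\Phi_T$ is a contraction on the complete metric space $(\mathcal{U},d_g)$, so by the Banach fixed point theorem it admits a unique fixed point $\bar{x}\in\mathcal{U}$. To upgrade this to an equilibrium of $X$, I would use that the flow maps commute: for any $t\geq 0$ we have $\Phi_T(\Phi_t(\bar{x})) = \Phi_t(\Phi_T(\bar{x})) = \Phi_t(\bar{x})$, so $\Phi_t(\bar{x})$ is itself a fixed point of $\Phi_T$; uniqueness then forces $\Phi_t(\bar{x})=\bar{x}$ for all $t$, i.e. $X(\bar{x})=0$. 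Uniqueness of the equilibrium in $\mathcal{U}$ follows from the estimate directly: if $\bar{x}_0$ and $\bar{x}_1$ are both fixed by the flow, then $d_g(\bar{x}_0,\bar{x}_1) = d_g(\Phi_t(\bar{x}_0),\Phi_t(\bar{x}_1)) \leq K e^{-\lambda t} d_g(\bar{x}_0,\bar{x}_1)$, and letting $t\to+\infty$ gives $\bar{x}_0=\bar{x}_1$.

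Exponential convergence is then immediate: for any $x\in\mathcal{U}$ one has $d_g(\Phi_t(x),\bar{x}) = d_g(\Phi_t(x),\Phi_t(\bar{x})) \leq K e^{-\lambda t} d_g(x,\bar{x}) \to 0$ as $t\to+\infty$. For the Lyapunov claim, I would compute the Lie derivative of $V = g(X,X)$ along $X$. Using the tensorial Lie-derivative identity from the preliminaries together with $\mathcal{L}_X X = [X,X] = 0$, the two correction terms vanish and I obtain $\mathcal{L}_X V = (\mathcal{L}_X g)(X,X)$. The contraction condition \eqref{contraction:condition} then yields $\mathcal{L}_X V \leq -2\lambda\, g(X,X) = -2\lambda V$. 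Since $V(x)=\|X(x)\|_g^2\geq 0$ vanishes exactly at the equilibria of $X$, and $\bar{x}$ is the unique such equilibrium in $\mathcal{U}$, the function $V$ is strictly positive away from $\bar{x}$ and strictly decreasing along every nonequilibrium trajectory, which is precisely the assertion that $V$ is a strict local Lyapunov function (and the bound $\mathcal{L}_X V\leq -2\lambda V$ even gives exponential decay of $V$ via Gr\"onwall's inequality).

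The main obstacle I anticipate is the passage from the discrete-time fixed point of $\Phi_T$ to a genuine equilibrium of the continuous-time system. The commuting-flows argument is the delicate link, and it crucially relies on $\Phi_t(\bar{x})$ remaining inside $\mathcal{U}$ — guaranteed by forward invariance — so that uniqueness of the Banach fixed point can legitimately be invoked for each $t$. Once this structural point is secured, the remaining steps, in particular the collapse of the Lie-derivative formula through the identity $[X,X]=0$, are routine.
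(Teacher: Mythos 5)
Your argument is correct, and it is the standard one: the paper itself states this proposition without proof, importing it from \cite{Simpson}, where the argument is exactly the one you give (Banach's fixed point theorem applied to the time-$T$ map $\Phi_T$ with $Ke^{-\lambda T}<1$, the flow-commutation trick to upgrade the discrete fixed point to an equilibrium, and the identity $\mathcal{L}_X V=(\mathcal{L}_X g)(X,X)\leq -2\lambda V$ coming from $[X,X]=0$ for the Lyapunov claim). Nothing is missing; your explicit appeal to forward invariance of $\mathcal{U}$ so that $\Phi_t(\bar{x})$ stays in the domain where Banach uniqueness applies is precisely the point that needs care, and you handle it.
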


So far, we have established the most important consequences implied by contracting behaviour. The remaining of the section will be dedicated to establish criteria to find contracting systems. Below, we prove a sufficient condition to identify contracting systems.

\begin{theorem}\label{main:theorem}
	If the symmetric $(0,2)$-tensor $\mathcal{L}_{X}g$ is negative definite on a compact set $\tilde{\mathcal{U}}$ then $(M,X,g,\mathcal{U})$ is a contracting system with $\mathcal{U}\subseteq \tilde{\mathcal{U}}$ an open subset.
\end{theorem}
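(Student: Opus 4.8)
The plan is to reduce the desired uniform inequality \eqref{contraction:condition} to a compactness argument on the unit sphere bundle of $(M,g)$ over $\tilde{\mathcal{U}}$. The core issue is that negative definiteness of $\mathcal{L}_X g$ is a \emph{pointwise} statement: it only guarantees $\mathcal{L}_X g(v_{x},v_{x})<0$ for each nonzero $v_{x}$. To produce a single contraction rate $\lambda>0$ valid throughout $\mathcal{U}$ one must extract a uniform gap between $\mathcal{L}_X g$ and $g$, and this is precisely the step where compactness is indispensable.

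First I would introduce the unit sphere bundle
\[
  S = \{(x,v_{x})\in TM : x\in\tilde{\mathcal{U}},\ g(v_{x},v_{x})=1\},
\]
and argue that $S$ is compact. It is a sphere bundle over the compact base $\tilde{\mathcal{U}}$ with compact (Euclidean-sphere) fibers: concretely, $S$ is the preimage of $\{1\}$ under the continuous map $(x,v_{x})\mapsto g(v_{x},v_{x})$, it is closed in $TM$ restricted to $\tilde{\mathcal{U}}$, and it is bounded in each fiber, so covering $\tilde{\mathcal{U}}$ by finitely many trivializing charts exhibits $S$ as a finite union of compact sets.

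Next I would consider the function $h:S\to\mathbb{R}$, $h(x,v_{x}) = (\mathcal{L}_X g)(v_{x},v_{x})$. Since $g$ and $\mathcal{L}_X g$ are smooth tensors, $h$ is continuous on the compact set $S$ and therefore attains a maximum; write $-2\lambda := \max_{S} h$. Because $\mathcal{L}_X g$ is negative definite at every point of $\tilde{\mathcal{U}}$ and every element of $S$ has nonzero vector part, $h<0$ everywhere on $S$, so the attained maximum satisfies $-2\lambda<0$, that is, $\lambda>0$. This is the key conclusion, and the compactness of $S$ is exactly what forbids the supremum of the pointwise rates from degenerating to zero.

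Finally I would promote the bound to arbitrary tangent vectors by homogeneity. For $v_{x}\neq 0$ at a point $x\in\tilde{\mathcal{U}}$, the normalized vector $w_{x}=v_{x}/\|v_{x}\|_{g}$ satisfies $(x,w_{x})\in S$, whence $(\mathcal{L}_X g)(w_{x},w_{x})\leqslant -2\lambda$; multiplying by $\|v_{x}\|_{g}^{2}$ and using that $v\mapsto(\mathcal{L}_X g)(v,v)$ and $v\mapsto g(v,v)$ are homogeneous of degree two yields $(\mathcal{L}_X g)(v_{x},v_{x})\leqslant -2\lambda\, g(v_{x},v_{x})$, the case $v_{x}=0$ being trivial. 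This establishes \eqref{contraction:condition} on all of $\tilde{\mathcal{U}}$, hence a fortiori on any open $\mathcal{U}\subseteq\tilde{\mathcal{U}}$, so $(M,X,g,\mathcal{U})$ is a contracting system with contraction rate $\lambda$. The only delicate point is the uniform strict negativity of $\max_S h$, which rests entirely on the compactness hypothesis.
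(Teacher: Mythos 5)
Your proof is correct, and it takes a genuinely different route from the paper's. The paper argues by perturbation: it shows that the set of negative definite symmetric $(0,2)$-tensors is open with respect to the sup-norm $\|h\|=\sup_{x\in\tilde{\mathcal{U}}}\sup_{\|v_x\|_g=1}|h(v_x,v_x)|$ (this is where compactness of $\tilde{\mathcal{U}}$ enters), and then concludes that the curve $\lambda\mapsto \mathcal{L}_X g+2\lambda g$ stays in that open set for $\lambda$ small, which is exactly condition \eqref{contraction:condition}. You instead extremize directly: you maximize $(\mathcal{L}_X g)(v_x,v_x)$ over the compact unit sphere bundle $S$ over $\tilde{\mathcal{U}}$, obtain a strictly negative maximum $-2\lambda$, and rescale by degree-two homogeneity. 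Both arguments rest on the same compactness (indeed, the paper's choice of a uniform $\varepsilon$ with $h(v_x,v_x)+\varepsilon<0$ is implicitly the same maximization over the sphere bundle, stated there a little loosely as a maximum over a single fiber), but yours is shorter, avoids the detour through sequences of tensors and openness of the cone of negative definite forms, and---unlike the paper's proof, whose accompanying remark explicitly concedes that no estimate on the contraction rate is obtained---produces the explicit value $\lambda=-\tfrac{1}{2}\max_{S}(\mathcal{L}_X g)(v_x,v_x)$. The one point worth spelling out slightly more is the compactness of $S$ itself (closedness plus fiberwise boundedness over finitely many trivializing charts), which you do indicate; with that, the argument is complete.
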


\begin{proof}
	Suppose that $\mathcal{L}_{X}g$ is negative definite on $\tilde{\mathcal{U}}$. The proof is based on a topological argument. We will prove that if $\mathcal{L}_{X}g$ is negative definite then there exists a small enough $\lambda>0$ making the symmetric $(0,2)$-tensor $\mathcal{L}_{X}g + 2\lambda g$ negative definite.
	
	
	Indeed, suppose that $\{h_{i}\}_{i \in \mathbb{N}}$ is a sequence of symmetric $(0,2)$-tensors converging to a negative definite symmetric $(0,2)$-tensor $h$, with respect to the norm
	$$\|h\|= \sup_{x \in \tilde{\mathcal{U}}} \sup_{\|v_{x}\|_{g}=1} |h(v_{x},v_{x})|.$$
    Note that this norm is well-defined, since $\tilde{\mathcal{U}}$ is compact (see Lemma \ref{norm:proof} in the Appendix for a proof). The fact that $h_{i}\rightarrow h$ with respect to this norm means that for all $\varepsilon>0$ there exists $N\in \mathbb{N}$ such that $\|h_{i}-h\|<\varepsilon.$
	Therefore for any $v_{x}\in T_{x} M$ with $x \in \tilde{\mathcal{U}}$ and $\|v_{x}\|_{g}=1$ we have that
	$$|h_{i}(v_{x},v_{x}) - h(v_{x},v_{x})| < \|h_{i}-h\|<\varepsilon$$
	Choosing $\varepsilon>0$ such that $h(v_{x},v_{x})+\varepsilon < 0$ which is possible since the function $v_{x} \mapsto h (v_{x},x_{x})$ has a maximum on the compact set $\|v_{x}\|_{g}=1$, we guarantee that for $i>N$ we have that $h_{i}(v_{x},v_{x})<0$. Now for any $v_{x}\neq 0$ we have that
	$$h_{i}(v_{x},v_{x})=\|v_{x}\|_{g}^{2}h_{i}\left(\frac{v_{x}}{\|v_{x}\|_{g}},\frac{v_{x}}{\|v_{x}\|_{g}}\right)<0$$
	for $i>N$. Thus we have proved that the set of negative definite quadratic forms is open. Consider the curve
	$$\lambda \in [0,+\infty) \mapsto \mathcal{L}_{X}g + 2\lambda g.$$
	Then, since the set of negative definite quadratic forms is open, the curve must remain there for sufficiently small values of $\lambda$. Choosing any open subset $\mathcal{U}$ inside $\tilde{\mathcal{U}}$, the result follows.
	
\end{proof}

\begin{remark}
	The previous result gives us a simple way to determine whether a system is contracting. However, note that it gives us no estimate on which values the contraction rate may assume.
\end{remark}

A useful property to rewrite the contracting condition \eqref{contraction:condition} is the following.

\begin{lemma}\label{useful:lemma}
Given a vector field $X$ on a Riemannian manifold $M$ with metric $g$ we have that
	\begin{equation*}
		\mathcal{L}_X g (v_{x},v_{x}) = 2 X^{c}(L_{g}).
	\end{equation*}
\end{lemma}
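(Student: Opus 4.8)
The plan is to exploit the characterization of the complete lift $X^{c}$ through its flow, as recalled in the preliminaries, in order to reduce the identity to the very definition of the Lie derivative of a $(0,2)$-tensor. Recall that the flow of $X^{c}$ on $TM$ is $T\phi_{t}^{X}$, where $\phi_{t}^{X}$ is the flow of $X$ on $M$. Since $X^{c}(L_{g})$ is by definition the Lie derivative of the function $L_{g}$ along $X^{c}$, it may be written as the time-derivative of $L_{g}$ transported by this flow, evaluated at a fixed $v_{x}\in T_{x}M$:
\[
X^{c}(L_{g})(v_{x}) = \frac{d}{dt}\Big|_{t=0} L_{g}\big(T\phi_{t}^{X}(v_{x})\big).
\]

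First I would rewrite the integrand in terms of the metric. Using $L_{g}(w)=\tfrac{1}{2}g(w,w)$ with $w=T\phi_{t}^{X}(v_{x})\in T_{\phi_{t}^{X}(x)}M$, the value $L_{g}(T\phi_{t}^{X}(v_{x}))$ equals $\tfrac{1}{2}$ times the pulled-back metric evaluated at $(v_{x},v_{x})$, since by definition $((\phi_{t}^{X})^{*}g)(v_{x},v_{x}) = g(T\phi_{t}^{X}(v_{x}),T\phi_{t}^{X}(v_{x}))$. Next I would differentiate at $t=0$. As the arguments $v_{x}$ are fixed and only the tensor depends on $t$, the derivative passes inside the bilinear form, and by the definition of the Lie derivative of a tensor as $\mathcal{L}_{X}g=\frac{d}{dt}\big|_{t=0}(\phi_{t}^{X})^{*}g$ I obtain
\[
X^{c}(L_{g})(v_{x}) = \frac{1}{2}\,\frac{d}{dt}\Big|_{t=0}\big((\phi_{t}^{X})^{*}g\big)(v_{x},v_{x}) = \frac{1}{2}(\mathcal{L}_{X}g)(v_{x},v_{x}),
\]
which is the claimed identity after multiplication by $2$.

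The delicate point I would watch carefully is the direction of transport: one must confirm that the flow of $X^{c}$ is indeed the tangent lift $T\phi_{t}^{X}$ (as stated in the preliminaries) and that the resulting object is the pullback $(\phi_{t}^{X})^{*}g$ rather than its inverse, so that signs are consistent with the tensorial formula $(\mathcal{L}_{X}g)(Y,Z)=X(g(Y,Z))-g(\mathcal{L}_{X}Y,Z)-g(Y,\mathcal{L}_{X}Z)$ used earlier. As a robust alternative that sidesteps any convention worry, I would repeat the computation in local coordinates: expanding $X^{c}(L_{g})$ from the coordinate expression of $X^{c}$ together with $\partial L_{g}/\partial \dot{q}^{k}=g_{kl}\dot{q}^{l}$ and $\partial L_{g}/\partial q^{i}=\tfrac{1}{2}(\partial g_{kl}/\partial q^{i})\dot{q}^{k}\dot{q}^{l}$, and comparing the result with $(\mathcal{L}_{X}g)_{ij}\dot{q}^{i}\dot{q}^{j}$. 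There the only subtlety is using the symmetry $g_{ij}=g_{ji}$ to merge the two identical cross terms into the factor $2$; both routes deliver the same conclusion.
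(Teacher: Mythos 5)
Your argument is correct, but it proceeds along a genuinely different route from the paper. The paper's proof is a direct coordinate computation: it expands $X^{c}(L_{g})$ using the local formula $X^{c}=X^{i}\frac{\partial}{\partial q^{i}}+\dot{q}^{j}\frac{\partial X^{i}}{\partial q^{j}}\frac{\partial}{\partial \dot{q}^{i}}$ together with $L_{g}=\frac{1}{2}g_{ij}\dot{q}^{i}\dot{q}^{j}$, separately expands $(\mathcal{L}_{X}g)_{ij}\dot{q}^{i}\dot{q}^{j}$ from the Leibniz-rule definition of the Lie derivative given in the preliminaries, and matches the two after symmetrizing in $i\leftrightarrow j$ --- exactly the ``robust alternative'' you sketch at the end. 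Your primary argument is instead intrinsic: you use the characterization of $X^{c}$ by its flow $T\phi_{t}^{X}$ to identify $L_{g}\circ T\phi_{t}^{X}$ with $\frac{1}{2}(\phi_{t}^{X})^{*}g$ evaluated on the diagonal, and then differentiate at $t=0$ using the dynamical definition $\mathcal{L}_{X}g=\frac{d}{dt}\big|_{t=0}(\phi_{t}^{X})^{*}g$. This is cleaner and makes the conceptual content transparent (the lemma is precisely the statement that the Lie derivative is the infinitesimal pullback along the flow, restricted to the quadratic form $L_{g}$), and it generalizes without change to any $(0,2)$-tensor in place of $g$. The one ingredient you use that the paper does not supply is the equivalence of the dynamical definition of $\mathcal{L}_{X}g$ with the algebraic one stated in the preliminaries; this is standard, and you correctly flag it as the point to verify, but in a fully self-contained write-up you would either cite that equivalence or fall back on the coordinate computation, which is what the authors chose to do.
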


\begin{proof}
    Let $(q^{i})$ be coordinates on $Q$ and $(q^{i},\dot{q}^{i})$ be the natural coordinates on $TQ$. Let $X=X^{i}\frac{\partial}{\partial q^{i}}$ and $L_{g}=\frac{1}{2}g_{ij}\dot{q}^{i}\dot{q}^{j}$. On one hand we have that
    \begin{equation*}
		X^{c}(L_{g})=\frac{1}{2}X^{k}\frac{\partial g_{ij}}{\partial q^{k}}\dot{q}^{i}\dot{q}^{j}+\dot{q}^{i}\frac{\partial X^{k}}{\partial q^{i}}g_{kj}\dot{q}^{j}.
    \end{equation*}
    On the other hand
    \begin{equation*}
		\mathcal{L}_{X}g=\left( X^{k}\frac{\partial g_{ij}}{\partial q^{k}}+g_{kj}\frac{\partial X^{k}}{\partial q^{i}}+g_{ik}\frac{\partial X^{k}}{\partial q^{j}} \right)\dot{q}^{i}\dot{q}^{j}.
	\end{equation*}
    Since $g$ is symmetric, we may interchange indices $i\leftrightarrow j$ in the last term to get
    \begin{equation*}
		\mathcal{L}_{X}g= X^{k}\frac{\partial g_{ij}}{\partial q^{k}}\dot{q}^{i}\dot{q}^{j}+g_{kj}\frac{\partial X^{k}}{\partial q^{i}} \dot{q}^{i}\dot{q}^{j},
    \end{equation*}
    which equals $2X^{c}(L_{g})$.
\end{proof}

Thus the contracting condition \eqref{contraction:condition} becomes
\begin{equation}\label{new:contraction:condtion}
	X^{c}(L_{g}) \leqslant -2\lambda L_{g}
\end{equation}

Therefore, the contracting condition can be written in terms of the contraction metric and its complete lift, and we can deduce when a system is contracting by only looking at the contraction metric and the geometry of the tangent bundle.
\section{Contracting forced Lagrangian systems}

In this section we will consider a mechanical system on $Q=\mathbb{R}^{n}$ given by a Lagrangian function of the type
\begin{equation}\label{mechanical:Lagrangian}
        L(q,\dot{q})=\frac{1}{2}\dot{q}^{T}M\dot{q} - \frac{1}{2} q^{T}Kq,
\end{equation}
and subject to the dissipative force
    \begin{equation}\label{dissipative:force}
    F(q,\dot{q})=-b\dot{q}^{i}, \quad b>0.
    \end{equation}
with $b>0$. We start with an example.

\begin{example}
    Let $Q=\mathbb{R}^{3}$ and, in addition, suppose that $K=k I_{3}$ with $k>0$, $M=\text{diag}(m1,m2,m3)$ and $m1<m2<m3$. Therefore the dynamics is given by the vector field
    $$\Gamma(q,\dot{q})=\left( \dot{q}, -M^{-1}(kq + b\dot{q}) \right).$$
    Consider the Riemannian metric $\mathbb{G}$ on $T\mathbb{R}^{3}$ determined by the block matrix
    $$\mathbb{G}=\begin{bmatrix}
    k I_{3} & b\varepsilon I_{3} \\
    b\varepsilon I_{3} & M
    \end{bmatrix}$$
    Then the quadratic form defined by $\mathcal{L}_{\Gamma}\mathbb{G}$ is negative-definite as long as $\varepsilon\in \left[0, \frac{2km_{1}}{b^{2} + 2m_{1}k}\right]$, under the hypothesis $m_{1}<m_{2}<m_{3}$. Thus, by Theorem \ref{main:theorem} the tuple $(T\mathbb{R}^{3},\Gamma,\mathbb{G},T\mathbb{R}^{3})$ is a contracting system.
\end{example}

As a consequence, we make the following definition.

\begin{definition}
    Given a mechanical Lagrangian $L:TQ\rightarrow \mathbb{R}$ of the type \eqref{mechanical:Lagrangian} and the dissipative force \eqref{dissipative:force}, the \textit{mechanical contraction metric associated with $L$ and $F$ and parameter $\varepsilon$} is the Riemannian metric given by
    \begin{equation}\label{mechanical:contraction metric}
    \mathbb{G}_{\varepsilon} = \begin{bmatrix}
        K & b\varepsilon I_{n} \\
        b\varepsilon I_{n} & M
    \end{bmatrix}
\end{equation}
\end{definition}

To finish this section, we prove a general result for unconstrained mechanical systems. Before we state the theorem, we provide a useful Lemma from Linear Algebra.

\begin{lemma}\label{shur:lemma}
    Consider a symmetric real block matrix of the form
    $$X=\begin{bmatrix}
        A & B \\
        B^{T} & C
    \end{bmatrix}$$
    such that $C$ is invertible. Then $X$ is negative definite if and only if $C$ and $A-BC^{-1}B^{T}$ are negative definite.
\end{lemma}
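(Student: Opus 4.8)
The plan is to reduce the problem to a block-diagonal matrix by a congruence transformation, exploiting the fact that congruence preserves definiteness. Concretely, because $C$ is invertible I would introduce the Schur complement $S := A - BC^{-1}B^{T}$ and verify the factorization
$$X = \begin{bmatrix} I & BC^{-1} \\ 0 & I \end{bmatrix} \begin{bmatrix} S & 0 \\ 0 & C \end{bmatrix} \begin{bmatrix} I & 0 \\ C^{-1}B^{T} & I \end{bmatrix}.$$
Writing $P := \begin{bmatrix} I & BC^{-1} \\ 0 & I \end{bmatrix}$ and using that $C$ (hence $C^{-1}$) is symmetric, so that $(BC^{-1})^{T}=C^{-1}B^{T}$, the rightmost factor is exactly $P^{T}$; thus $X = P D P^{T}$ with $D := \diag(S,C)$. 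The verification is a routine block multiplication in which the off-diagonal terms cancel precisely through the definition of $S$.

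The key point is that $P$ is invertible (it is unit upper-triangular, with $\det P = 1$). Hence for any vector $w$ partitioned conformally with the blocks, setting $u := P^{T}w$ gives
$$w^{T} X w = (P^{T}w)^{T} D (P^{T}w) = u^{T} D u,$$
and the map $w \mapsto u = P^{T}w$ is a bijection of the ambient space onto itself. Consequently $w^{T}Xw < 0$ for every $w \neq 0$ if and only if $u^{T}Du<0$ for every $u \neq 0$; that is, $X$ is negative definite if and only if $D$ is negative definite.

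It then remains to observe that the block-diagonal matrix $D = \diag(S,C)$ is negative definite if and only if both $S$ and $C$ are. Writing $u = (u_{1},u_{2})$ gives $u^{T}Du = u_{1}^{T} S u_{1} + u_{2}^{T} C u_{2}$; restricting to $u_{2}=0$ forces $S$ to be negative definite, restricting to $u_{1}=0$ forces $C$ to be negative definite, and conversely a sum of two negative quantities is negative. Combining this with the previous paragraph yields the claimed equivalence. I do not anticipate any genuine obstacle; the only point requiring care is getting the transpose factor right, which is exactly where the symmetry of $C$ enters, and correctly checking the off-diagonal cancellation in the factorization.
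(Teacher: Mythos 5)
Your proof is correct: the congruence factorization $X = PDP^{T}$ with $P$ unit block-triangular and $D=\diag(S,C)$ is verified correctly, the symmetry of $C$ is used exactly where needed to identify the right factor as $P^{T}$, and the reduction to the block-diagonal case is sound. The paper itself gives no proof of this lemma (it only cites a reference), and your argument is precisely the standard block-elimination proof found there, so there is nothing to compare beyond noting that you have supplied the omitted details correctly.
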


See \cite{Boyd} for a proof. Last lemma is sometimes known as Schur's lemma in reference to the matrix $S=A-BC^{-1}B^{T}$ which is called Schur's complement.

\begin{theorem}\label{mechanical:theorem}
    Suppose $L:TQ\rightarrow \mathbb{R}$ is a Lagrangian of the type \eqref{mechanical:Lagrangian} subjected to the dissipative force given by \eqref{dissipative:force} and such that $K$ and $M$ are symmetric positive definite real matrices. In addition, suppose that $K$ and $M^{-1}$ commute.
    
    Let $\bar{q}$ be an isolated critical point of $\text{grad } V$ which is also an equilibrium point of the equations of motion. Then there exists a neighbourhood $\mathcal{U}$ of $0_{\bar{q}}\in TQ$ and a parameter $\varepsilon$ such that $(TQ,\Gamma,\mathbb{G}_{\varepsilon},\mathcal{U})$ is a contracting system for the mechanical contraction metric $\mathbb{G}$. 
\end{theorem}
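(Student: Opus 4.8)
The plan is to reduce the statement to Theorem~\ref{main:theorem}: it suffices to produce a parameter $\varepsilon$ for which the symmetric $(0,2)$-tensor $\mathcal{L}_{\Gamma}\mathbb{G}_{\varepsilon}$ is negative definite on a compact neighbourhood of $0_{\bar{q}}$. For a Lagrangian of the type \eqref{mechanical:Lagrangian} the potential is $V(q)=\tfrac12 q^{T}Kq$ with $K$ positive definite, so the only critical point of $\text{grad }V$ is $\bar{q}=0$ and $0_{\bar{q}}$ is the origin of $TQ\cong\mathbb{R}^{2n}$. The decisive simplification is that the forced vector field $\Gamma(q,\dot q)=(\dot q,-M^{-1}(Kq+b\dot q))$ is a \emph{linear} vector field on $\mathbb{R}^{2n}$ and $\mathbb{G}_{\varepsilon}$ is a \emph{constant} tensor; hence $\mathcal{L}_{\Gamma}\mathbb{G}_{\varepsilon}$ is a constant symmetric matrix, negative definiteness at one point already forces it everywhere, and the neighbourhood in the statement is needed only to feed the compact-set hypothesis of Theorem~\ref{main:theorem}.

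First I would compute the Lie derivative explicitly. The vector field has constant Jacobian
\begin{equation*}
A=\begin{bmatrix} 0 & I_{n} \\ -M^{-1}K & -bM^{-1}\end{bmatrix},
\end{equation*}
and, since $\mathbb{G}_{\varepsilon}$ is constant, the coordinate formula for the Lie derivative of a metric loses its transport term and collapses to $\mathcal{L}_{\Gamma}\mathbb{G}_{\varepsilon}=A^{T}\mathbb{G}_{\varepsilon}+\mathbb{G}_{\varepsilon}A$. Performing the block multiplication, using that $K,M$ are symmetric and that $K$ and $M^{-1}$ commute to merge the top-left block, gives
\begin{equation*}
\mathcal{L}_{\Gamma}\mathbb{G}_{\varepsilon}=\begin{bmatrix} -2b\varepsilon\, M^{-1}K & -b^{2}\varepsilon\, M^{-1} \\ -b^{2}\varepsilon\, M^{-1} & 2b(\varepsilon-1)I_{n}\end{bmatrix}.
\end{equation*}
Commutativity is exactly what turns the top-left block into the symmetric positive definite matrix $M^{-1}K$ (a product of commuting positive definite matrices) instead of the merely symmetrized expression $-b\varepsilon(M^{-1}K+KM^{-1})$.

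Next I would invoke Schur's Lemma~\ref{shur:lemma} with $C=2b(\varepsilon-1)I_{n}$. Restricting to $0<\varepsilon<1$ makes $C$ negative definite and invertible, so negative definiteness of $\mathcal{L}_{\Gamma}\mathbb{G}_{\varepsilon}$ is equivalent to negative definiteness of the Schur complement
\begin{equation*}
S=-2b\varepsilon\, M^{-1}K-\frac{b^{3}\varepsilon^{2}}{2(\varepsilon-1)}M^{-2}.
\end{equation*}
Because $K$ and $M^{-1}$ commute, $M^{-1}K$ and $M^{-2}$ are simultaneously diagonalizable and both positive definite, so I may check negative definiteness eigenvalue-by-eigenvalue: writing $\kappa>0$ and $\mu>0$ for a common pair of eigenvalues of $K$ and $M$, the associated eigenvalue of $S$ is negative exactly when $4\kappa\mu(1-\varepsilon)>b^{2}\varepsilon$, i.e. when $\varepsilon<4\kappa\mu/(b^{2}+4\kappa\mu)$. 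Observe that for $0<\varepsilon<1$ the first term of $S$ is negative definite of order $\varepsilon$ while the second is positive definite of order $\varepsilon^{2}$, so the inequality certainly holds for small $\varepsilon$; choosing $\varepsilon$ below the smallest of these thresholds renders $S$, and with it $\mathcal{L}_{\Gamma}\mathbb{G}_{\varepsilon}$, negative definite.

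Finally, fixing such an $\varepsilon$, the constant tensor $\mathcal{L}_{\Gamma}\mathbb{G}_{\varepsilon}$ is negative definite on any compact neighbourhood $\tilde{\mathcal{U}}$ of $0_{\bar q}$, and Theorem~\ref{main:theorem} then yields an open $\mathcal{U}\subseteq\tilde{\mathcal{U}}$ making $(TQ,\Gamma,\mathbb{G}_{\varepsilon},\mathcal{U})$ a contracting system. I expect the only real obstacle to be the sign analysis of $S$: a priori it pits the $M^{-1}K$ term against the $M^{-2}$ term of opposite sign, and it is solely the commutativity hypothesis---which grants both the symmetry of $M^{-1}K$ and simultaneous diagonalization---that lets the problem collapse to the scalar inequalities above. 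The Lie-derivative identity for a constant metric and the passage to a neighbourhood are routine.
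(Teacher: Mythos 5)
Your proposal is correct and follows essentially the same route as the paper: both reduce the claim to negative definiteness of the same constant $2n\times 2n$ block matrix (yours is twice the paper's, computed via $A^{T}\mathbb{G}_{\varepsilon}+\mathbb{G}_{\varepsilon}A$ rather than via the complete-lift identity of Lemma~\ref{useful:lemma}, which changes nothing) and then apply Schur's Lemma~\ref{shur:lemma} with $C$ a negative multiple of the identity. The only divergence is the final step: the paper shows the Schur complement is negative definite for small $\varepsilon$ by a continuity/openness argument starting from $F_{0}=-KM^{-1}$, whereas you simultaneously diagonalize $K$ and $M$ and extract the explicit threshold $\varepsilon<4\kappa\mu/(b^{2}+4\kappa\mu)$, a slightly more quantitative but equivalent conclusion.
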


\begin{remark}
    The condition that $K$ and $M^{-1}$ commute is achieved for instance when $K$ and $M$ are diagonal matrices.
\end{remark}

\begin{proof}
    Consider the Lagrangian function $L_{\mathbb{G}_{\varepsilon}}:T(TQ)\rightarrow \mathbb{R}$ determined by the kinetic energy. If $(q,\dot{q},v,\dot{v})$ are the natural bundle coordinates on $T(TQ)$ then
    $$L_{\mathbb{G}_{\varepsilon}}=\frac{1}{2} \left( v^{T}Kv + 2b\varepsilon \dot{v}^{T}v + \dot{v}^{T}M\dot{v} \right).$$
    The complete lift on the mechanical vector field $\Gamma$ is
    $$\Gamma^{c}=\left( \dot{v}, M^{-1}(-Kv-b\dot{v}) \right).$$
    Therefore,
    \begin{equation}\label{mechanical:expression}
        \Gamma^{c}(L_{\mathbb{G}_{\varepsilon}}) = -b\varepsilon v^{T} K M^{-1} v - b^{2} \varepsilon \dot{v}M^{-1}v + (b\varepsilon -b)\dot{v}^{T}\dot{v}.
    \end{equation}
    Using Theorem \ref{main:theorem} and Lemma \ref{useful:lemma}, we must prove that the (block) matrix
    $$\begin{bmatrix}
        -b\varepsilon KM^{-1} & -\frac{b^{2}\varepsilon}{2}M^{-1} \\
        -\frac{b^{2}\varepsilon}{2}M^{-1} & (b\varepsilon-b)I_{n}
    \end{bmatrix},$$
    where $I_{n}$ is the identity matrix and $n$ is the dimension of the configuration space $Q$, is negative definite. Using Lemma \ref{shur:lemma}, it is enough to show that $C:=(b\varepsilon-b)I_{n}$ is invertible and negative definite and $$D_{\varepsilon}:=-b\varepsilon KM^{-1} - \left(\frac{b^{2}\varepsilon}{2}M^{-1}\right)\left(\frac{1}{b\varepsilon-b}I_{n}\right)\left(\frac{b^{2}\varepsilon}{2}M^{-1}\right)$$ is negative definite.
    It is easy to see that $C$ is negative definite and invertible whenever $\varepsilon<1$. Hence, let us suppose that $\varepsilon<1$. It is not difficult to show that $D_{\varepsilon}$ is negative definite if and only if
    $$\tilde{D}_{\varepsilon}=\varepsilon(\varepsilon-1)KM^{-1}+\frac{b^{2}\varepsilon^{2}}{4}M^{-2}$$ is negative definite. Supposing that $\varepsilon>0$, then the condition that $\tilde{D}_{\varepsilon}$ is negative definite is equivalent to
    $$F_{\varepsilon} = \varepsilon\left( KM^{-1} + \frac{b^{2}}{4}M^{-2} \right) - KM^{-1}$$ being negative definite.
    
    If $K$ and $M^{-1}$ commute then $KM^{-1}$ is also positive definite. Hence, for $\varepsilon=0$, $F_{0}=-KM^{-1}$ is negative definite and for $\varepsilon=1$, $F_{1}=\frac{b^{2}}{4}M^{-2}$ is positive definite. Then by continuity reasons, there exists $\varepsilon\in]0, 1[$ such that $F_{\varepsilon}$ is negative definite.    
\end{proof}

\begin{remark}If $M, K$ are in the conditions of Theorem \ref{mechanical:theorem}, from Proposition \ref{contracting:lyapunov:prop}, a Lyapunov function for dissipative mechanical systems is
$$V(q,\dot{q})=\|\Gamma(q,\dot{q})\|_{\mathbb{G}_{\varepsilon}}^{2}.$$

Thus, for Lagrangian functions of the type \eqref{mechanical:Lagrangian} and dissipative forces of the type \eqref{dissipative:force}, the Lyapunov function is
\begin{equation*}
    \begin{split}
        V(q,\dot{q}) = & q^{T}KM^{-1}Kq + 2b\dot{q}^{T}(M^{-1}K-\varepsilon M^{-1}K)q \\
        & + \dot{q}^{T}(K + b^{2}M^{-1} - 2b^{2}\varepsilon M^{-1})\dot{q}
    \end{split}
\end{equation*}


Following Proposition \ref{contracting:lyapunov:prop}, $V$ is a strict Lyapunov function for the equilibrium point $(0,0)$ and all trajectories starting in nearby points converge exponentially fast to the equilibrium point.

\end{remark}

\section{Extension to contact Lagrangian systems}

The previous discussion might be extended to the case of contact system with Lagrangian of the type
\begin{equation}\label{contact:Lagrangian}
        L(q,\dot{q}, z)=\frac{1}{2}\dot{q}^{T}M\dot{q} - \frac{1}{2} q^{T}Kq - \frac{b}{m}z,
\end{equation}
with $b,m>0$, whose corresponding vector field is of the form
\begin{equation*}
    \begin{split}
        \Gamma(q,\dot{q}, z)=&\left( \dot{q}, -M^{-1}(Kq +  \frac{b}{m}M\dot{q}), \frac{1}{2}\dot{q}^{T}M\dot{q} \right. \\
        & \left. - \frac{1}{2} q^{T}Kq - \frac{b}{m}z \right).
    \end{split}
\end{equation*}

In this case, we will consider the \textit{contact contraction metric} on $TQ \times \mathbb{R}$ given by
\begin{equation}\label{contact:contraction metric}
    \mathbb{G}_{\varepsilon} = \begin{bmatrix}
        K & b\varepsilon I_{n} & 0 \\
        b\varepsilon I_{n} & M & 0 \\
        0 & 0 & b
    \end{bmatrix}
\end{equation}

\begin{theorem}\label{contact:theorem}
    Suppose $L:TQ\times \mathbb{R}\rightarrow \mathbb{R}$ is a  contact Lagrangian of the type \eqref{contact:Lagrangian} and such that $M$ is the diagonal matrix with all entries equal to $m>0$ and $K$ is a symmetric positive definite real matrix that commutes with $M^{-1}$.
    
    There exists a neighbourhood $\mathcal{U}$ of $(0,0)\in TQ\times \mathbb{R}$ and a parameter $\varepsilon$ such that $(TQ,\Gamma,\mathbb{G}_{\varepsilon},\mathcal{U})$ is a contracting system for the contact  contraction metric $\mathbb{G}_{\varepsilon}$. 
\end{theorem}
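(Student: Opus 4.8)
The plan is to follow the architecture of the proof of Theorem~\ref{mechanical:theorem}, now treating $TQ\times\mathbb{R}$ as the base manifold and $\mathbb{G}_\varepsilon$ as the candidate contraction metric. First I form the kinetic-energy function $L_{\mathbb{G}_\varepsilon}$ associated with $\mathbb{G}_\varepsilon$ on $T(TQ\times\mathbb{R})$: writing the natural fibre coordinates over $(q,\dot q,z)$ as $(v,\dot v,w)$, one has $L_{\mathbb{G}_\varepsilon}=\tfrac12\big(v^{T}Kv+2b\varepsilon\,\dot v^{T}v+\dot v^{T}M\dot v+b\,w^{2}\big)$. By Lemma~\ref{useful:lemma} the contraction condition \eqref{new:contraction:condtion} reduces to showing that the fibrewise quadratic form $\Gamma^{c}(L_{\mathbb{G}_\varepsilon})$ is negative definite, so the whole problem is again one of checking negative definiteness of a symmetric block matrix, now of size $2n+1$.

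Next I would compute the complete lift $\Gamma^{c}$ from the coordinate formula for $X^{c}$. The $q$- and $\dot q$-components reproduce exactly the unconstrained situation, contributing fibre parts $\dot v$ and $-M^{-1}Kv-\tfrac{b}{m}\dot v$; the genuinely new ingredient is the $z$-component of $\Gamma$, namely $L$ itself, whose lift contributes $-v^{T}Kq+\dot v^{T}M\dot q-\tfrac{b}{m}w$. Contracting $\Gamma^{c}$ against the partial derivatives of $L_{\mathbb{G}_\varepsilon}$ (the base part of $\Gamma^c$ annihilates $L_{\mathbb{G}_\varepsilon}$, whose coefficients are constant) and using $M=mI_n$, I expect to obtain
\[
\Gamma^{c}(L_{\mathbb{G}_\varepsilon})=(b\varepsilon-b)\dot v^{T}\dot v-\tfrac{b\varepsilon}{m}v^{T}Kv-\tfrac{b^{2}\varepsilon}{m}\dot v^{T}v-\tfrac{b^{2}}{m}w^{2}-b\,w\,v^{T}Kq+bm\,w\,\dot v^{T}\dot q .
\]
The decisive structural observation is that the first three terms are precisely the mechanical quadratic form of \eqref{mechanical:expression}, the term $-\tfrac{b^{2}}{m}w^{2}$ decouples, and the last two are cross terms between $w$ and $(v,\dot v)$ whose coefficients are linear in the base point $(q,\dot q)$.

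The key step is to evaluate this form at the equilibrium $(q,\dot q)=(0,0)$, where the base-dependent cross terms vanish and the associated symmetric matrix becomes block diagonal: the $(v,\dot v)$ block coincides with the matrix analysed in Theorem~\ref{mechanical:theorem} specialised to $M=mI_n$, and the scalar $w$-block is $-\tfrac{b^{2}}{m}<0$. By the Schur-complement argument of Theorem~\ref{mechanical:theorem} (via Lemma~\ref{shur:lemma}) there is a parameter $\varepsilon\in(0,1)$ making the $(v,\dot v)$ block negative definite, and hence $\Gamma^{c}(L_{\mathbb{G}_\varepsilon})$ is negative definite at $(0,0)$ for that $\varepsilon$.

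Finally, I would invoke the openness of the negative-definite cone established inside the proof of Theorem~\ref{main:theorem}: since the entries of the matrix of $\Gamma^{c}(L_{\mathbb{G}_\varepsilon})$ depend continuously (indeed affinely) on $(q,\dot q)$ and do not depend on $z$, negative definiteness persists on a neighbourhood of $(0,0)$; choosing a closed coordinate ball $\tilde{\mathcal{U}}$ inside it yields a compact set on which $\mathcal{L}_{\Gamma}\mathbb{G}_\varepsilon$ is negative definite, and Theorem~\ref{main:theorem} then produces a contracting system $(TQ\times\mathbb{R},\Gamma,\mathbb{G}_\varepsilon,\mathcal{U})$ on any open $\mathcal{U}\subseteq\tilde{\mathcal{U}}$ containing $(0,0)$. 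The main obstacle, and the essential difference from Theorem~\ref{mechanical:theorem}, is exactly these base-point-dependent cross terms produced by the dissipative $z$-dynamics: they obstruct a global conclusion and confine the result to a neighbourhood of the equilibrium, which is why the statement claims only a local $\mathcal{U}$ rather than all of $TQ\times\mathbb{R}$.
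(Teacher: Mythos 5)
Your proposal is correct and follows essentially the same route as the paper: compute $\Gamma^{c}(L_{\mathbb{G}_\varepsilon})$, observe that it equals the mechanical expression \eqref{mechanical:expression} plus a negative $w^{2}$ term and cross terms whose coefficients are linear in the base point $(q,\dot q)$, and reduce to negative definiteness of the resulting $(2n+1)\times(2n+1)$ matrix, reusing Theorem \ref{mechanical:theorem} for the upper-left block. The only cosmetic difference is the final locality step --- the paper bounds the Schur complement $-b^{2}-B^{T}A^{-1}B$ directly for $(q,\dot q)$ small, while you evaluate at the origin and invoke openness of the negative-definite cone --- and your coefficient $-b^{2}w^{2}/m$ (versus the paper's $-b^{2}w^{2}$) is an immaterial discrepancy since both are negative.
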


\begin{proof}
    First notice that since $M$ is the diagonal matrix with all entries equal to $m>0$, the equations of motion for are just
    $$\ddot{q} = M^{-1} \left(- Kq - b\dot{q}\right), \quad \dot{z}=L.$$
    Following the same steps as in the proof of Theorem \ref{mechanical:theorem}, we write $\Gamma^{c}(L_{\mathbb{G}_\varepsilon})$ where $\Gamma$ is now the contact vector field and $\mathbb{G}_{\varepsilon}$ is the contact contraction metric.

    Then
    $$\Gamma^{c}(L_{\mathbb{G}_\varepsilon}) = r_{0} -bw v^{T}K q + bw \dot{v}^{T}M\dot{q} - b^{2}w^{2},$$
    where $r_{0}$ is the expression obtained in \eqref{mechanical:expression}. Therefore, we must prove that the matrix
    $$\begin{bmatrix}
        -b\varepsilon KM^{-1} & -\frac{b^{2}\varepsilon}{2}M^{-1} & -\frac{1}{2}bKq \\
        -\frac{b^{2}\varepsilon}{2}M^{-1} & (b\varepsilon-b)I_{n} & \frac{1}{2}bM\dot{q} \\
        -\frac{1}{2}bq^{T}K & \frac{1}{2}b\dot{q}^{T}M & -b^{2}
    \end{bmatrix}$$
    is negative definite. Using Lemma \ref{shur:lemma} and the fact that the upper left diagonal matrix is negative definite (as we have already seen in the proof of Theorem \ref{mechanical:theorem}) all we have to check is that the real number $x=-b^{2} - B^{T}A^{-1} B$ is negative, where $A$ is the upper left $2n\times 2n$ matrix and $B^{T}=[-\frac{1}{2}bq^{T}K, \frac{1}{2}b\dot{q}^{T}M]$. Notice that the second term is a quadratic form evaluated on the vector $B$. If $(q,\dot{q})$ are restricted to a sufficiently small subset $\mathcal{U}$ containing $0$, then the norm of $B^{T}A^{-1} B$ can be made smaller than $b^{2}$. Thus on $\mathcal{U}$ we have that $x<0$.
\end{proof}

\begin{example}
    Consider the mechanical system given by the contact Lagrangian
    $$L=\frac{1}{2}\dot{q}^{2}-\frac{1}{2}q^{2} - bz.$$

    The associated contact dynamical system is
    $$\ddot{q} =- q - b \dot{q}, \quad \dot{z}=L.$$

    The contact contraction metric is now
    $$\mathbb{G}_{\varepsilon} = \begin{bmatrix}
        1 & b\varepsilon & 0 \\
        b\varepsilon & 1 & 0 \\
        0 & 0 & b
    \end{bmatrix}.$$
    And so we have to examine the nature of the quadratic form
    $$\Gamma^{c}(L_{\mathbb{G}_\varepsilon}) = -b\varepsilon v^{2} - b^{2} \varepsilon \dot{v}v + (b\varepsilon -b)\dot{v}^{2} - bq w v + b\dot{q} w \dot{v} - b^{2}w^{2},$$
    associated to the $3\times 3$ matrix
    $$\begin{bmatrix}
        -b\varepsilon & -\frac{b^{2}\varepsilon}{2} & -\frac{1}{2}bq \\
        -\frac{b^{2}\varepsilon}{2} & (b\varepsilon-b) & \frac{1}{2}b\dot{q} \\
        -\frac{1}{2}bq^{T} & \frac{1}{2}b\dot{q}^{T} & -b^{2}
    \end{bmatrix}.$$
    If $0<\varepsilon<\frac{2}{b^{2} + 2}$, then the upper left $2\times 2$ matrix is negative definite. To show that the matrix above is also negative definite we need in addition that
    $$x=-b^{2} - B^{T}A^{-1} B<0,$$
    where $B$ is the vector with coordinates $(-\frac{1}{2}bq, \frac{1}{2}b\dot{q})$. Since the matrix $A^{-1}$ also defines a quadratic form, if we squeeze the vector $(q,\dot{q})$ to be small enough, then $B$ will be also close to zero and $x$ will approach the value $-b^{2}$. Therefore, there is a neighbourhood of $0$ in which the matrix above is negative definite.
\end{example}

\begin{remark}As a sub-product of our approach, we may study the stability of contact systems with Lagrangian functions of the type \eqref{contact:Lagrangian} and where $M, K$ are in the conditions of Theorem \ref{contact:theorem}. In particular, from Proposition \ref{contracting:lyapunov:prop}, a Lyapunov function for these systems is
$$V(q,\dot{q},z)=\|\Gamma(q,\dot{q},z)\|_{\mathbb{G}_{\varepsilon}}^{2}.$$

In the last example, the Lyapunov function is
$$V(q,\dot{q},z)=(1-2b^{2}\varepsilon)\dot{q}^{2}-2b\varepsilon\dot{q}q + b\left( \frac{1}{2}\dot{q}^{2}-\frac{1}{2}q^{2} - bz \right)^{2}.$$

Following Proposition \ref{contracting:lyapunov:prop}, $V$ is a strict Lyapunov function for the equilibrium point $(0,0,0)$ and all trajectories starting in nearby points converge exponentially fast to the equilibrium point.

\end{remark}

\section{Application to reduced nonholonomic systems with symmetry}

It is a well-known fact that when we have a symmetry, the number of equations describing the motion of nonholonomic systems may be reduced. Under certain conditions, the reduced system is Hamiltonian, meaning that it may be seen as the Hamiltonian vector field with respect to some Hamiltonian function defined on the reduced space (\cite{CaLeMaMa, BaSn, SCSarlet99}).

Consider, as an example, the vertical rolling disk with potential given by the Lagrangian function
\begin{equation*}
    L=\frac{m}{2}\left( \dot{x}^2 + \dot{y}^2\right) + \frac{I}{2}\dot{\theta}^2 + \frac{J}{2}\dot{\varphi}^2 - \frac{1}{2}(\theta + \varphi)
\end{equation*}
together with the non-slipping constraints $\dot{x}=R\dot{\theta}\cos \varphi$, $\dot{y}=R\dot{\theta}\sin \varphi$ generating the distribution
$$\mathcal{D}= \left\langle \left\{ \frac{\partial}{\partial \theta} + R\cos \varphi \frac{\partial}{\partial x} + R \sin \varphi \frac{\partial}{\partial y}, \frac{\partial}{\partial \varphi} \right\} \right\rangle$$
This system is invariant under the action of translations, meaning that the group $\mathbb{R}^{2}$ acts on the configuration manifold $Q=\mathbb{R}^{2}\times \mathbb{S}^{1}\times \mathbb{S}^{1}$, according to the map $\Phi_{(r,s)}:(x,y,\theta,\varphi) \mapsto (x+r,y+s,\theta,\varphi)$. Then both the Lagrangian function as well as the distribution are invariant with respect to the tangent lift of this action $T\Phi_{(r,s)}:TQ \rightarrow TQ $. In fact, one may prove that this is an example of a Chaplygin system (\cite{CaLeMaMa}) associated to a projection $\pi:Q\rightarrow \overline{Q}:= Q/\mathbb{R}^{2}\equiv \mathbb{S}^{1}\times \mathbb{S}^{1}$.


The complete equations of motion might be reduced using the projection by $\pi$. The reduced equations for this nonholonomic system are
\begin{equation*}
    \begin{cases}
        (I+m R)\ddot{\theta} = - \theta \\
        J \ddot{\varphi} = - \varphi
    \end{cases}
\end{equation*}
which is a Lagrangian system with respect to the reduced Lagrangian $l:T\overline{Q}\rightarrow \mathbb{R}$
\begin{equation*}
    l = \frac{(I+m R)}{2}\dot{\theta}^2 + \frac{J}{2}\dot{\varphi} - \frac{1}{2}(\theta + \varphi).
\end{equation*}
In addition, suppose that a cyclic dissipative force is acting on the nonholonomic system in $TQ$ of the type
$$F= -b\left( \dot{\theta} d\theta + \dot{\varphi} d\varphi\right).$$
This external force respects the symmetry of the system and appears in the reduced equations of motion giving
\begin{equation}\label{reduced:equations:disk}
    \begin{cases}
        (I+m R)\ddot{\theta} = - \theta - b \dot{\theta} \\
        J \ddot{\varphi} = - \varphi - b \dot{\varphi}.
    \end{cases}
\end{equation}
The reduced system \eqref{reduced:equations:disk} is a mechanical system with a dissipative force in the conditions of Theorem \ref{mechanical:theorem}. Therefore there is a neighbourhood $\mathcal{U}\subseteq T\overline{Q}$ of the equilibrium point $(0,0,0,0)$ where the system is contracting for the metric
$$\mathbb{G}_{\varepsilon} = \begin{bmatrix}
        1 & 0 & b\varepsilon & 0 \\
        0 & 1 & 0 & b\varepsilon \\
        b\varepsilon & 0 & I+m R & 0 \\
        0 & b\varepsilon & 0 & J
    \end{bmatrix}$$
on $T\overline{Q}$. Therefore, using Proposition \ref{contracting:lyapunov:prop}, we conclude that trajectories of the reduced dynamics starting in $\mathcal{U}$ converge exponentially fast to the equilibrium point, with respect to the mechanical contraction metric $\mathbb{G}_{\varepsilon}$ on $T\overline{Q}$ and for a suitable value of $\varepsilon$.

On the original dynamics on $TQ$, this implies that the submanifold $\mathcal{EP}\subseteq \mathcal{D}$, locally defined by $\theta=0, \varphi=0, \dot{\theta}=0, \dot{\varphi}=0$ and the corresponding nonholonomic constraints $\dot{x}=0$, $\dot{y}=0$, is composed by equilibrium points of the nonholonomic system.



Now, consider in the original tangent bundle $TQ$ a Riemannian metric $\mathcal{G}$ satisfying the following conditions:
\begin{equation}\label{lifted:metric}
    \begin{split}
        & \mathcal{G}(X^{H},Y^{H}) = \mathbb{G}(X,Y), \\
        & \mathcal{G}(X^{H},Z) = 0, \quad \forall X, Y \in \mathfrak{X}(T\bar{Q}) \text{ and } Z \in V,
    \end{split}
\end{equation}
where $V$ is the vertical distribution. There exists more than one Riemannian metric satisfying these conditions. Note that the value of $\mathcal{G}$ on the vertical bundle is not fixed but it must be a symmetric non-degenerate tensor on vectors belonging to the vertical distribution. To such a choice of Riemannian metric on $TQ$ we call an horizontal lift of the contraction metric $\mathbb{G}_{\varepsilon}$.

Using any horizontal lift of the metric $\mathbb{G}_{\varepsilon}$, we may deduce that any solution starting in a tubular neighborhood around the equilibrium submanifold $\mathcal{EP}$ converges exponentially fast to $\mathcal{EP}$. Here, the distance of a point $v\in \mathcal{D}$ to $\mathcal{EP}$ is given by
$$d_{\mathcal{G}}(v,\mathcal{EP}) = \inf_{w \in \mathcal{EP}} d_{\mathcal{G}}(v,w).$$

\begin{theorem}\label{nh:constraction:theorem}
    Suppose that $(L,\mathcal{D})$ is a Chaplygin nonholonomic system and suppose that the reduced dynamics is a mechanical system associated to a Lagrangian of the type \eqref{mechanical:Lagrangian}. In addition, suppose that $F$ is a cyclic force map and in the reduced equations appears in the form \eqref{dissipative:force}.
    
    Then there exists a tubular neighbourhood $\mathcal{U}$ of the submanifold $\mathcal{EP}$ of equilibrium points of the forced nonholonomic system $(L,\mathcal{D}, F)$ such that any nonholonomic solution starting in $\mathcal{U}$ converges exponentially fast to $\mathcal{EP}$, with respect to a metric $\mathcal{G}$ satisfying \eqref{lifted:metric}.
\end{theorem}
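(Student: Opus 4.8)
The plan is to reduce the claim to the contraction of the \emph{reduced} dynamics, already covered by Theorem \ref{mechanical:theorem}, and then transport the resulting exponential estimate back to $\mathcal{D}$ through the Chaplygin projection, viewed as a Riemannian submersion. First I would apply Theorem \ref{mechanical:theorem} to the reduced system: by hypothesis $\overline{\Gamma}$ on $T\overline{Q}$ is the forced mechanical vector field of a Lagrangian of type \eqref{mechanical:Lagrangian} with a dissipative force appearing as \eqref{dissipative:force}, so once one checks that the reduced mass and stiffness matrices are symmetric, positive definite and commute---an inheritance from the reduction that is a direct computation, as in the rolling-disk example---Theorem \ref{mechanical:theorem} produces a neighbourhood $\overline{\mathcal{U}}$ of the reduced equilibrium $0_{\bar q}\in T\overline{Q}$ and a parameter $\varepsilon$ making $(T\overline{Q},\overline{\Gamma},\mathbb{G}_{\varepsilon},\overline{\mathcal{U}})$ a contracting system. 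Specializing the main contraction estimate of \cite{Simpson} (with one of the two points taken to be the equilibrium $0_{\bar q}$) then yields
\[
 d_{\mathbb{G}_{\varepsilon}}\big(\overline{\Phi}_{t}(\bar x),0_{\bar q}\big)\le K e^{-\lambda t}\, d_{\mathbb{G}_{\varepsilon}}\big(\bar x,0_{\bar q}\big),\qquad \bar x\in\overline{\mathcal{U}},\ t\ge 0,
\]
so every reduced trajectory starting in $\overline{\mathcal{U}}$ converges exponentially to $0_{\bar q}$.

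Next I would set up the dictionary between the two dynamics. Write $\tilde\pi:=T\pi|_{\mathcal{D}}:\mathcal{D}\to T\overline{Q}$ for the restriction of the tangent projection to the constraint distribution, with vertical bundle $V=\ker T\tilde\pi$. Because $\Gamma_{nh}$ is the horizontal lift of $\overline{\Gamma}$, it is $\tilde\pi$-related to $\overline{\Gamma}$ and therefore maps nonholonomic solutions to reduced ones, $\tilde\pi(\Phi^{nh}_{t}(v))=\overline{\Phi}_{t}(\tilde\pi(v))$; moreover $v$ is an equilibrium of $\Gamma_{nh}$ iff $\overline{\Gamma}(\tilde\pi(v))=0$, since horizontal lifting is fibrewise injective, so the equilibrium submanifold is exactly the fibre $\mathcal{EP}=\tilde\pi^{-1}(0_{\bar q})$. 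Finally I would fix a metric $\mathcal{G}$ on $TQ$ satisfying \eqref{lifted:metric}: the two conditions there state precisely that the distribution spanned by the horizontal lifts $X^{H}$ is the $\mathcal{G}$-orthogonal complement of $V$ and that $\tilde\pi$ restricts to a fibrewise isometry on it, i.e. that $\tilde\pi:(\mathcal{D},\mathcal{G})\to(T\overline{Q},\mathbb{G}_{\varepsilon})$ is a Riemannian submersion along whose horizontal distribution $\Gamma_{nh}$ lifts $\overline{\Gamma}$.

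The heart of the argument is the transfer of distances through this submersion. On one side, $\tilde\pi$ is distance non-increasing and collapses $\mathcal{EP}$ to the single point $0_{\bar q}$, giving $d_{\mathbb{G}_{\varepsilon}}(\tilde\pi(v),0_{\bar q})\le d_{\mathcal{G}}(v,\mathcal{EP})$. On the other side, for $v\in\mathcal{D}$ I would take a minimizing curve downstairs from $\tilde\pi(v)$ to $0_{\bar q}$ and lift it horizontally starting at $v$; by the Riemannian-submersion property this lift has the same length and ends on the fibre $\mathcal{EP}$, so $d_{\mathcal{G}}(v,\mathcal{EP})\le d_{\mathbb{G}_{\varepsilon}}(\tilde\pi(v),0_{\bar q})$. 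Choosing the tubular neighbourhood $\mathcal{U}$ of $\mathcal{EP}$ so that $\tilde\pi(\mathcal{U})\subseteq\overline{\mathcal{U}}$ and combining these bounds with the reduced estimate yields, for every nonholonomic solution with $\Phi^{nh}_{0}(v)=v\in\mathcal{U}$,
\[
 d_{\mathcal{G}}\big(\Phi^{nh}_{t}(v),\mathcal{EP}\big)\le d_{\mathbb{G}_{\varepsilon}}\big(\overline{\Phi}_{t}(\tilde\pi v),0_{\bar q}\big)\le K e^{-\lambda t}\, d_{\mathbb{G}_{\varepsilon}}\big(\tilde\pi v,0_{\bar q}\big)\le K e^{-\lambda t}\, d_{\mathcal{G}}(v,\mathcal{EP}),
\]
which is the asserted exponential convergence to $\mathcal{EP}$.

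The step I expect to be the main obstacle is precisely this distance-to-fibre estimate, namely making the horizontal-lift-of-curves argument rigorous: one must guarantee that the horizontal lift of the minimizing downstairs curve exists along its whole length, stays within the tubular neighbourhood where $\mathcal{G}$ and the submersion structure are available, and that $\mathcal{U}$ can be chosen forward invariant for $\Gamma_{nh}$ (the analogue of the $K$-reachable, forward-invariant hypotheses used in the theorems of \cite{Simpson}). Since the curves being lifted are short---their length equals a distance that is already decaying exponentially---and the fibres are complete $G$-orbits, these lifts do not escape in finite time, so the estimate survives on a genuine tubular neighbourhood of $\mathcal{EP}$; the only other thing to nail down is the routine verification of the commuting positive-definiteness hypotheses of Theorem \ref{mechanical:theorem} for the reduced matrices.
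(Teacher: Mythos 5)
Your proposal is correct and follows essentially the same route as the paper: apply Theorem \ref{mechanical:theorem} to the reduced dynamics and transport the exponential estimate back to $\mathcal{D}$ using the fact that horizontal curves have the same $\mathcal{G}$-length as their projections measured in $\mathbb{G}_{\varepsilon}$. If anything, your two-sided distance comparison via the Riemannian-submersion structure is more careful than the paper's argument, which only bounds $d_{\mathcal{G}}\big(\Phi^{nh}_{t}(v),\mathcal{EP}\big)$ from above by the length of the tail of the horizontally lifted trajectory.
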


\begin{proof}
    The distance of a point $v$ in $\mathcal{D}$ to the equilibrium manifold as measured by $\mathcal{G}$ is always smaller than the length of an integral curve of $\Gamma_{nh}$ that joins $v$ and a point in the equilibrium submanifold. This curve exists in a tubular neighborhood of the submanifold $\mathcal{EP}$, since it is the horizontal lift of a solution of the reduced dynamics that finishes in the equilibrium point. In addition, since this curve is horizontal, its length is the same as the length of the reduced curve as measured by the mechanical contraction metric $\mathbb{G}_{\varepsilon}$, which tends to zero exponentially fast. Therefore, the distance on $TQ$ must also converge exponentially fast.
\end{proof}

From Theorem \ref{nh:constraction:theorem}, we deduce that any trajectory of the vertical rolling disk starting sufficiently close to the equilibrium submanifold
$$\mathcal{EP} = \{ (x,y,\theta,\varphi,\dot{x},\dot{y},\dot{\theta},\dot{\varphi})\in \mathcal{D} | \theta=0, \varphi=0, \dot{\theta}=0, \dot{\varphi}=0\}$$
converges exponentially fast to a point in $\mathcal{EP}$, with respect to an horizontal lift of the contracting metric.

\section{Conclusions}

We have addressed the problem of identifying contracting systems among dynamical systems
appearing in mechanics. We provide a sufficient condition to identify contracting systems in a general Riemannian manifold and we applied this technique to establish that particular types of dissipative systems are contracting as well as we have stated immediate consequences of this fact for the stability of these systems. For future work, the results of this paper could be extension to general potentials and several types of mechanical systems with symmetries (not necessarily constrained). One of the most interesting applications of this approach could be the study of the stability of multi-agent formation problems through contraction theory. 

In a different direction, it would be also interesting to show how to adapt our results to dissipative systems in general Riemannian manifolds and especially to Lie groups, since many problems in robotics evolve on this type of manifolds.

\nocite{*}

\bibliography{newbib}
\bibliographystyle{IEEEtran}

\appendix

\begin{lemma}\label{norm:proof}
Let $M$ be a manifold and $\overline{\mathcal{U}}\subseteq M$ a compact subset. Then
    $$\|h\|=\sup_{x\in \tilde{\mathcal{U}}}\sup_{\|v_{x}\|_{g}=1} |h(v_{x},v_{x})|$$
is a norm on the space of symmetric $(0,2)$-tensors on $\overline{\mathcal{U}}$.
\end{lemma}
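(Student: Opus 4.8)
The plan is to verify the four defining properties of a norm—finiteness (so that $\|\cdot\|$ is a well-defined real-valued map), absolute homogeneity, the triangle inequality, and positive definiteness—treating finiteness as the only step that genuinely exploits the compactness hypothesis and the remaining three as routine consequences of the defining formula.

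First I would establish finiteness, which I expect to be the heart of the argument. The key observation is that the relevant domain is the unit sphere bundle $S = \{ v_{x} \in T\overline{\mathcal{U}} : \|v_{x}\|_{g} = 1 \}$, a subset of the tangent bundle restricted to the compact base $\overline{\mathcal{U}}$. Since $g$ is a continuous (indeed smooth) Riemannian metric and each fibre $\{v_{x} : \|v_{x}\|_{g} = 1\}$ is a Euclidean sphere, hence compact, the total space $S$ is compact (in any local trivialization it is closed and bounded, and compactness of the base together with compactness of the fibres gives compactness of the bundle). The map $v_{x} \mapsto |h(v_{x},v_{x})|$ is continuous on $S$ because $h$ is a tensor field with continuous components, and a continuous real-valued function on a compact set attains its maximum. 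Hence the double supremum is in fact a finite maximum, giving $\|h\| < \infty$ for every symmetric $(0,2)$-tensor $h$.

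With finiteness in hand, the remaining algebraic properties follow directly from the defining formula. Nonnegativity is immediate from the absolute value. For absolute homogeneity I would pull the scalar out, using $|(\alpha h)(v_{x},v_{x})| = |\alpha|\,|h(v_{x},v_{x})|$, and take suprema to get $\|\alpha h\| = |\alpha|\,\|h\|$. The triangle inequality follows from the pointwise estimate $|(h_{1}+h_{2})(v_{x},v_{x})| \le |h_{1}(v_{x},v_{x})| + |h_{2}(v_{x},v_{x})| \le \|h_{1}\| + \|h_{2}\|$, valid for every unit $v_{x}$, after passing to the supremum on the left.

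The last property—definiteness, $\|h\|=0 \Rightarrow h=0$—is where the symmetry hypothesis is used, and it is the only place where evaluating $h$ solely on the diagonal $(v_{x},v_{x})$ rather than on arbitrary pairs must be justified; I flag this as the one subtle point. If $\|h\|=0$ then $h(v_{x},v_{x})=0$ for every unit vector, and by homogeneity of the quadratic form $v_{x} \mapsto h(v_{x},v_{x})$ this forces $h(v_{x},v_{x})=0$ for all $v_{x} \in T_{x}\overline{\mathcal{U}}$. Since $h$ is symmetric, the polarization identity $h(v,w) = \tfrac{1}{2}\big( h(v+w,v+w) - h(v,v) - h(w,w) \big)$ then yields $h(v,w)=0$ for all $v,w$, so $h=0$ at every point. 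Everything here is carried out pointwise in $x$, with the outer supremum over the compact base merely recording the uniform bound; no argument beyond the compactness already used for finiteness is needed to conclude that $\|\cdot\|$ is a genuine norm.
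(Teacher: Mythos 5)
Your proposal is correct and follows essentially the same route as the paper's proof: finiteness from compactness of the unit sphere bundle over $\overline{\mathcal{U}}$, the triangle inequality from the pointwise triangle inequality for the absolute value, and definiteness from the polarization identity for the symmetric tensor $h$ (which is exactly the expansion of $h(v_{x}+w_{x},v_{x}+w_{x})$ used in the paper). Your write-up is in fact slightly more careful, since you spell out the compactness argument and record absolute homogeneity explicitly, but there is no substantive difference in method.
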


\begin{proof}
    We will prove that $\|h\|$ is in fact a norm on the space of symmetric $(0,2)$-tensors on $\overline{\mathcal{U}}$. First of all, this norm has a well-defined value for each tensor since the supreme are measured over compact sets. The-non trivial conditions to be satisfied are the triangle inequality and positive definiteness, i.e., $\|h\|=0\implies h=0.$
    
    Triangle inequality follows from the triangle inequality for the absolute value $|\cdot|$. If $\|h\|=0$, we have that $h(v_{x},v_{x})=0$ for all $v_{x}\in T_{x}M$ and all $x\in \overline{\mathcal{U}}$. Using this fact together with symmetry of $h$ we have
\begin{equation*}
    \begin{split}
        h(v_{x},w_{x}) & = h(v_{x},v_{x}) + 2h(v_{x},w_{x}) + h(w_{x},w_{x}) \\
        & =h(v_{x}+w_{x},v_{x}+w_{x})=0.
    \end{split}
\end{equation*}
Therefore $h=0$.
\end{proof}

\clearpage

\end{document}